\def\dvol{d\nu}
\def\gronko{\vphantom{\vrule height 11pt}}
\def\pone{\hbox{\rm\textsf{1}\hspace*{-0.9ex}
  \rule{0.15ex}{1.3ex}\hspace*{0.9ex}}}
  \newtheorem{theorem}{Theorem}[section]
\newtheorem{lemma}[theorem]{Lemma}
\newtheorem{remark}[theorem]{Remark}
\def\Id{\operatorname{Id}}
\def\mapright#1{\smash{\mathop{\longrightarrow}\limits\sp{#1}}}
\begin{document}
\title[Homogeneous K\"ahler--Weyl Structures]{Homogeneous 
4-dimensional\\ K\"ahler--Weyl Structures}
\author[Brozos-V\'{a}zquez et. al.]
{M. Brozos-V\'{a}zquez, E. Garc\'{\i}a-R\'{\i}o, P. Gilkey,\\
 and R. V\'{a}zquez-Lorenzo}
\address{MB: Department of Mathematics, University of A Coru\~na, Spain}
\email{mbrozos@udc.es}
\address{EG and RV: Faculty of Mathematics, University of Santiago de Compostela, Spain}
\email{eduardo.garcia.rio@usc.es and ravazlor@edu.xunta.es}
\address{PG: Mathematics Department, University of Oregon,
  Eugene OR 97403, USA}
  \email{gilkey@uoregon.edu}
\begin{abstract}{Any pseudo-Hermitian or para-Hermitian manifold of dimension
$4$ admits a unique K\"ahler--Weyl structure; this structure is locally
conformally K\"ahler if and only if the alternating Ricci tensor $\rho_a$
vanishes. The tensor $\rho_a$ takes values in a certain representation
space. In this paper, we show that any algebraic possibility
$\Xi$ in this representation space can in fact be geometrically realized by a left-invariant 
K\"ahler--Weyl structure on a $4$-dimensional Lie group in either the pseudo-Hermitian or the para-Hermitian setting.
MSC 2010: 53A15, 53C15, 15A72.}\end{abstract}
\maketitle
\section{Introduction}
Let $M$ be a smooth manifold of dimension $m=2\bar m\ge4$ with $H^1(M;\mathbb{R})=0$; 
we are only really interested in local theory so
this cohomology vanishing condition poses no real restriction. Let
$\nabla$ be a torsion free connection on the tangent bundle of
$M$, and let $g$ be a pseudo-Riemannian metric on $M$ of signature $(p,q)$.

\subsection{Weyl structures} The triple $(M,g,\nabla)$ is said to be a {\it Weyl}
structure and $\nabla$ is said to be a {\it Weyl connection} if there exists a smooth $1$-form $\phi$ on $M$ so that
$$\nabla g=-2\phi\otimes g\,.$$
Weyl \cite{W22} used these geometries in an attempt to unify gravity
with electromagnetism -- although this approach failed for physical reasons, the resulting geometries are still of importance and 
there is a vast literature on the subject. See, for example, \cite{BM11, GNS11, HN11, K11,  L04, M11}; note
that the indefinite signature setting is of particular importance \cite{DGST11, GJ08, L04, MOP12} as is the complex
setting \cite{KT12, LM09, OV08}. The field is a vast one and we only cite a few
representative recent examples.
 We introduce the following notational conventions and follow the treatment of \cite{B12} (Section 6.5) 
 which is based on work of \cite{GI94,H32,PS91,PT93}.
Let $\mathcal{R}$ be the curvature operator of a Weyl connection $\nabla$ and let $R$ be the associated curvature tensor:
$$
\mathcal{R}(x,y)z:=(\nabla_x\nabla_y-\nabla_y\nabla_x-\nabla_{[x,y]})z\text{ and }
R(x,y,z,w):=g(\mathcal{R}(x,y)z,w)\,.
$$
Let $\rho$ be the {\it Ricci tensor}, let $\rho_a$ be the {\it alternating Ricci tensor}, and let $\rho_s$ be the {\it symmetric Ricci tensor}:
\begin{eqnarray*}
&&\rho(x,y):=\operatorname{Tr}\{z\rightarrow\mathcal{R}(z,x)y\},\qquad
\rho_a(x,y)=\textstyle\frac12(\rho(x,y)-\rho(y,x)),\\
&&\rho_s(x,y)=\textstyle\frac12(\rho(x,y)+\rho(y,x))\,.
\end{eqnarray*}
We have the symmetries:
\begin{eqnarray}
&&R(x,y,z,w)+R(y,x,z,w)=0,\nonumber\\
&&R(x,y,z,w)+R(y,z,x,w)+R(z,x,y,w)=0,\label{eqn-1.a}\\
&&R(x,y,z,w)+R(x,y,w,z)=-\textstyle\frac4m\rho_a(x,y)g(z,w)\nonumber\,.
\end{eqnarray}

This is a conformal theory; if $\tilde g=e^{2f}g$ is a conformally equivalent metric, 
then $(M,\tilde g,\nabla)$ is again a Weyl
structure with associated $1$-form $\tilde\phi:=\phi-df$. The Weyl structure is said to be 
{\it trivial} (or {\it integrable}) if $\nabla$ is the
Levi-Civita connection of a conformally equivalent metric or, equivalently (given our 
assumption that $H^1(M;\mathbb{R})=0$), if $d\phi=0$. The following formula
shows that the Weyl structure is trivial if and only if $\rho_a=0$:
\begin{equation}\label{eqn-1.b}
d\phi=-\textstyle\frac2m\rho_a\,.
\end{equation}

\subsection{Complex manifolds} Let $J_-$ be an almost complex structure on $M$,
 i.e. an automorphism of the tangent bundle $TM$ so that $J_-^2=-\Id$. 
 We say that $J_-$ is {\it integrable} if there exist local coordinates 
 $(x_1,...,x_{\bar m},y_1,...,y_{\bar m})$ on a neighborhood of any point of $M$ so that
 $$J_-\partial_{x_i}=\partial_{y_i}\quad\text{and}\quad J_-\partial_{y_i}=-\partial_{x_i}\,.$$
We define the {\it Nijenhuis
tensor} by setting:
$$
N_{J_-}(x,y):=[x,y]+J_-[J_-x,y]+J_-[x,J_-y]-[J_-x,J_-y]\,.
$$
Then $J_-$ is integrable if and only if $N_{J_-}$ vanishes. Let 
$T_{\mathbb{C}}M:=TM\otimes_{\mathbb{R}}\mathbb{C}$ be the complexified tangent bundle.
Let $\mathcal{W}_\pm$ be the $\pm\sqrt{-1}$ eigenbundles  of $J_-$:
\begin{eqnarray}\label{eqn-1.c}
   &&\mathcal{W}_\pm:=\left\{Z\in T_{\mathbb{C}}M:J_-Z=\pm\sqrt{-1}Z\right\}\\
   &&\qquad\phantom{}=\mathbb{C}\cdot\left\{E\mp\sqrt{-1}J_-E\right\}_{E\in TM}\,.\nonumber
\end{eqnarray}
The distribution $\mathcal{W}_-$ (or, equivalently, $\mathcal{W}_+={\bar{\mathcal{W}}}_-$) 
determines the almost complex structure $J_-$. Furthermore, $J_-$ is integrable if and only if
the complex Frobenius condition is satisfied:
$$[C^\infty(\mathcal{W}_-),C^\infty(\mathcal{W}_-)]\subset C^\infty(\mathcal{W}_-)\,.$$

\subsection{Para-complex manifolds} Let $J_+$ be an almost para-complex structure on $M$, i.e. 
an automorphism  $J_+$ of $TM$ so that $J_+^2=\operatorname{Id}$ and so that 
$\operatorname{Tr}(J_+)=0$. We say that $J_+$ is {\it integrable} if there exist local coordinates 
$(x_1,...,x_{\bar m},y_1,...,y_{\bar  m})$  on a neighborhood of any point of $M$  
so that
$$J_+\partial_{x_i}=\partial_{y_i}\quad\text{and}\quad J_+\partial_{y_i}=\partial_{x_i}\quad\text{for}\quad 1\le i\le \bar m\,.$$
We form the real eigenbundles 
$$\mathcal{W}_\pm:=\left\{X\in C^\infty(TM):J_+X=\pm X\right\}=\{E\pm J_+E\}_{E\in TM}\,.$$
Then $J_+$ is integrable if and only if $[C^\infty(\mathcal{W}_\pm),C^\infty(\mathcal{W}_\pm)]
\subset C^\infty(\mathcal{W}_\pm)$; 
in contrast to the complex setting, both conditions are required. 
The Nijenhuis tensor in this context is defined by
$$
N_{J_+}(x,y):=[x,y]-J_+ [J_+ x,y]-J_+ [x,J_+ y]+[J_+ x,J_+ y]\,;
$$
the two distributions $\{\mathcal{W}_+,\mathcal{W}_-\}$ determine $J_+$ and the para-complex structure 
is integrable if and only if $N_{J_+}=0$.
\subsection{(Para)-K\"ahler--Weyl structures} Let $g$ be a pseudo-Riemannian metric on $M$ of
signature $(p,q)$. 
In the complex setting, we assume that $J_-$ is almost
pseudo-Hermitian (i.e. $J_-g=g$) and in the para-complex setting, we assume that $J_+$ is 
almost para-Hermitian (i.e. $J_+g=-g$); here, we extend $J_\pm$ to act naturally on tensors of all types. 
We shall use the notation $J_\pm$ as it is a convenient formalism for discussing both geometries
in a parallel format; we shall never, however, be considering both the complex setting ($-$) and
the para-complex setting $(+)$ at the same moment.

Let $(M,g,\nabla)$ be a Weyl structure and let $(M,g,J_\pm)$ be a pseudo-Hermitian ($-$)
or a para-Hermitian ($+$) structure. We say that the quadruple $(M,g,\nabla,J_\pm)$ is 
a  {\it K\"ahler--Weyl} structure if $\nabla J_\pm=0$; this
necessarily implies that $J_\pm$ is integrable so we restrict to this setting henceforth. 
We then have the additional curvature
symmetry:
\begin{equation}\label{eqn-1.d}
R(x,y,J_\pm z,J_\pm w)=\mp R(x,y,z,w)\,.
\end{equation}
We say that the K\"ahler--Weyl structure is {\it trivial} (or {\it integrable}) if $\nabla$
 is the Levi-Civita connection of some conformally equivalent K\"ahler metric.
As the K\"ahler--Weyl structure is trivial if and only if the alternating Ricci tensor
$\rho_a=0$, attention is focused on this tensor. 
Pedersen, Poon, and Swann
\cite{PPS93} used work of Vaisman \cite{V82,V83}
 to establish the following result
 in the positive definite setting; the
extension to the pseudo-Hermitian or to the para-Hermitian setting is immediate (see, for example, the discussion in \cite{B12}):
\begin{theorem} If $m>4$, then any K\"ahler--Weyl structure is trivial.
\end{theorem}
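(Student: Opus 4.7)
The plan is to derive a first-order differential identity from the K\"ahler--Weyl condition, differentiate it once more to obtain an algebraic constraint of the form $d\phi\wedge\omega=0$, and then appeal to a Lefschetz-type injectivity on $2$-forms to conclude $d\phi=0$.

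I would first introduce the fundamental $2$-form $\omega(X,Y):=g(X,J_\pm Y)$. A check using $J_\pm g = \mp g$ shows that $\omega$ is skew and non-degenerate (hence pseudo-symplectic) in both the Hermitian and the para-Hermitian settings. Combining $\nabla g=-2\phi\otimes g$ with $\nabla J_\pm=0$ and expanding $(\nabla_Z\omega)(X,Y)=(\nabla_Z g)(X,J_\pm Y)$ gives
$$\nabla\omega=-2\,\phi\otimes\omega.$$
Since $\nabla$ is torsion-free, antisymmetrizing this identity yields $d\omega=-2\,\phi\wedge\omega$.

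Next, applying $d$ to this relation and using $d^2\omega=0$ together with $\phi\wedge\phi=0$ gives the purely algebraic identity
$$d\phi\wedge\omega=0.$$
By (\ref{eqn-1.b}), $d\phi=-\tfrac{2}{m}\rho_a$, so proving the theorem reduces to showing $d\phi=0$ when $m>4$.

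For the final step I would invoke the hard Lefschetz property of symplectic linear algebra applied pointwise: for a non-degenerate $2$-form $\omega$ on a vector space of dimension $2\bar m$, the iterated wedge
$$L_\omega^{\bar m-2}\colon \Lambda^2\longrightarrow\Lambda^{2\bar m-2}$$
is an isomorphism. When $\bar m\ge3$, i.e.\ $m\ge6$, factoring $L_\omega^{\bar m-2}=L_\omega^{\bar m-3}\circ L_\omega$ forces $L_\omega\colon\Lambda^2\to\Lambda^4$ to be injective. Applied to $\alpha=d\phi$, this together with $d\phi\wedge\omega=0$ yields $d\phi=0$, and hence $\rho_a=0$ by (\ref{eqn-1.b}).

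The only place the dimension hypothesis enters, and therefore the main obstacle, is this Lefschetz-injectivity step. In dimension $4$ one has $\bar m=2$, the target $\Lambda^4$ is one-dimensional and $L_\omega$ on $\Lambda^2$ has a large kernel; the chain of implications breaks precisely there. This is consistent with the paper's thesis that the $4$-dimensional case is genuinely exceptional and leaves ample room for nontrivial alternating Ricci tensors.
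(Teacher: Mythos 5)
Your proof is correct, and it is essentially the argument the paper is relying on: the paper itself gives no proof of this theorem, instead citing Pedersen--Poon--Swann \cite{PPS93} and Vaisman \cite{V82,V83}, and the chain you describe ($\nabla\Omega=-2\phi\otimes\Omega$, hence $d\Omega=-2\phi\wedge\Omega$, hence $d\phi\wedge\Omega=0$, hence $d\phi=0$ by injectivity of $L_\Omega$ on $2$-forms when $m\ge6$) is exactly the standard proof from those references; it carries over verbatim to the pseudo- and para-Hermitian settings since the linear Lefschetz isomorphism needs only non-degeneracy of $\Omega$, not a compatible positive metric. Your closing remark correctly identifies why the argument, and the theorem, fail precisely in dimension $4$.
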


Thus only dimension $m=4$ is interesting in this theory. Let
$$\Omega(x,y):=g(x,J_\pm  y)$$
denote the K\"ahler form. Let $\delta$ be the co-derivative. If $\star$ is the {\it Hodge operator} and if
$\Omega_{ij;k}$ are the components of $\nabla^g\Omega$, then the {\it Lee form} $\delta\Omega$
is given by:
$$\delta\Omega=-\star d\star\Omega=g^{ij}\Omega_{ij;k}dx^k\,.$$
Note that $J_\pm\delta\Omega$ is called the {\it anti-Lee form}. We refer to Section~\ref{sect-3.1} for further details.
The following result was established \cite{KK10} in
the Riemannian setting; the proof extends without change to this more general context  -- we also refer to \cite{GN11c,GN12a}
for another treatment and to \cite{LH10} for related material. 

\begin{theorem}\label{thm-1.2}
Every para-Hermitian ($+$) or pseudo-Hermitian ($-$) manifold  of dimension $4$ admits a unique 
K\"ahler--Weyl structure where we have that
$\phi=\pm\textstyle\frac12J_\pm\delta\Omega$,
$\nabla_xy:=\nabla_x^gy+\phi(x)y+\phi(y)x-g(x,y)\phi^\star$ (here $\phi^\star$ is the dual vector field), and
$\rho_a=\mp dJ_\pm\delta\Omega=-2d\phi$.
\end{theorem}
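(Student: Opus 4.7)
The plan is to reduce everything to finding a single $1$-form $\phi$. A direct algebraic computation shows that any torsion-free connection with $\nabla g = -2\phi \otimes g$ must be given by the stated formula $\nabla_x y = \nabla^g_x y + \phi(x) y + \phi(y) x - g(x,y) \phi^\star$. Plugging this into $\nabla J_\pm = 0$, the $\phi(x) J_\pm y$ contributions cancel and one obtains the pointwise tensorial equation
\begin{equation*}
(\ast) \qquad (\nabla^g_x J_\pm) y = \phi(y) J_\pm x - \phi(J_\pm y) x + g(x, J_\pm y) \phi^\star - g(x, y) J_\pm \phi^\star,
\end{equation*}
so the whole theorem reduces to showing that $(\ast)$ has a unique solution $\phi$, equal to $\pm\tfrac{1}{2} J_\pm \delta\Omega$.

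For uniqueness I would trace $(\ast)$ in $x$ by contracting with $g^{ij}g(\,\cdot\,,e_j)$, $x=e_i$. On the right, the first term vanishes because $\operatorname{Tr}(J_\pm)=0$, the second contributes $-m\,\phi(J_\pm y)$, and the last two each contribute $+\phi(J_\pm y)$ after using the skew-adjointness $g(J_\pm u,v)=-g(u,J_\pm v)$; the total is $(2-m)\phi(J_\pm y)$, which equals $-2\phi(J_\pm y)$ when $m=4$. The trace of the left side equals $-\delta\Omega(y)$ from the definition $\Omega(u,v)=g(u,J_\pm v)$. Solving gives $\phi(J_\pm y)=\tfrac{1}{2}\delta\Omega(y)$, and replacing $y$ by $J_\pm y$ together with $J_\pm^2=\pm\Id$ produces $\phi=\pm\tfrac{1}{2}J_\pm\delta\Omega$.

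The substantive step is existence: showing that this $\phi$ satisfies the full identity $(\ast)$, not merely its trace. Both sides of $(\ast)$ take values in skew-adjoint endomorphisms of $TM$ (from $g(J_\pm u,v)=-g(u,J_\pm v)$) and both anti-commute with $J_\pm$ in their $y$-slot (because $J_\pm^2=\pm\Id$ is parallel), so they lie in the same $U(1,1)$- (respectively $GL(2,\mathbb{R})$-) module. Integrability of $J_\pm$, which is automatic once $\nabla J_\pm=0$ is solvable by a torsion-free $\nabla$, eliminates the Nijenhuis-type components of $\nabla^g J_\pm$, and in dimension $4$ the remaining summand is precisely the image of the linear map $\phi \mapsto \mathrm{RHS}(\ast)$. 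Hence matching traces forces the tensorial identity. The cleanest verification is a short computation in a $J_\pm$-adapted pseudo-orthonormal frame.

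The remaining assertions are formal: equation~(\ref{eqn-1.b}) with $m=4$ reads $\rho_a=-2\,d\phi$, and substituting $\phi=\pm\tfrac{1}{2}J_\pm\delta\Omega$ gives $\rho_a=\mp\,d J_\pm\delta\Omega$. I expect the main obstacle to be the existence verification: uniqueness is a one-line trace, but passing from uniqueness of the trace to the full solution requires both $m=4$ and integrability of $J_\pm$ in an essential way, and is the only step with genuine geometric content.
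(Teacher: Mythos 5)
First, a point of reference: the paper does not prove Theorem~\ref{thm-1.2} at all. It states that the result was established in \cite{KK10} in the Riemannian setting, that the proof extends without change, and points to \cite{GN11c,GN12a} for another treatment. So there is no internal proof to compare against, and your attempt has to be judged on its own terms. On those terms, the reduction to the single tensorial equation $(\ast)$ is correct (a torsion-free connection with $\nabla g=-2\phi\otimes g$ is indeed forced to be the stated one, and the $\phi(x)J_\pm y$ terms do cancel), and the uniqueness half is complete: $J_\pm$ is skew-adjoint in both the Hermitian and para-Hermitian cases, the four traces are $0$, $-m\,\phi(J_\pm y)$, $+\phi(J_\pm y)$, $+\phi(J_\pm y)$, and since $2-m\neq 0$ this pins down $\phi=\pm\frac12J_\pm\delta\Omega$. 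The statements about $\rho_a$ then follow formally from Equation~(\ref{eqn-1.b}).

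The gap is in the existence half, exactly where you locate the content, and the problem is that you state the key fact instead of proving it. What must be shown is: for integrable $J_\pm$ in dimension $4$, the tensor $A(x,y,z):=g((\nabla^g_xJ_\pm)y,z)$ lies in the image of the linear map $L:\phi\mapsto\mathrm{RHS}(\ast)$. A complete argument is a finite piece of linear algebra: the ambient module $\{A:\ A(x,y,z)=-A(x,z,y),\ A(x,J_\pm y,J_\pm z)=-A(x,y,z)\}$ has dimension $4\cdot 2=8$ when $m=4$; the algebraic Nijenhuis tensors form a $4$-dimensional space and $N_{J_\pm}$ is a linear function of $A$ whose associated projection has rank $4$; $\operatorname{image}(L)$ is $4$-dimensional ($L$ is injective by your own trace computation) and is contained in $\ker$ of that projection (if $\nabla^gJ_\pm=L\phi$ then the Weyl connection is torsion free with $\nabla J_\pm=0$, so $N_{J_\pm}=0$); hence $\ker(N)=\operatorname{image}(L)$ and integrability forces $A\in\operatorname{image}(L)$. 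None of these three counts is verified in your write-up, and the sentence ``in dimension $4$ the remaining summand is precisely the image of the linear map'' is precisely the theorem's nontrivial content --- it is the pseudo-/para-analogue of the Gray--Hervella statement that a Hermitian $4$-manifold is automatically of locally-conformally-K\"ahler type, and it is the one place where both $m=4$ and integrability are used. Either the dimension count above or the adapted-frame computation you allude to needs to be carried out before this can be called a proof.
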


\subsection{The (para)-unitary group}
We study the quadruple $(T_PM,g_P,J_{\pm,P},R_P)$ where $P$ is a point of a K\"ahler--Weyl manifold. We shall
eventually be interested in the homogeneous setting and thus the point $P$ will be inessential.
We pass to the algebraic setting
and work abstractly.  Let
$(V,\langle\cdot,\cdot\rangle,J_\pm)$ be a pseudo-Hermitian ($-$) or a para-Hermitian ($+$) vector space. 
Introduce the following structure groups:
\begin{eqnarray*}
&&\mathcal{O}=\mathcal{O}(V,\langle\cdot,\cdot\rangle):=\{T\in\operatorname{GL}(V):T^*\langle\cdot,\cdot\rangle=
        \langle\cdot,\cdot\rangle\},\\
&&\mathcal{U}=\mathcal{U}(V,\langle\cdot,\cdot\rangle,J_\pm):=\{T\in\mathcal{O}:TJ_\pm=J_\pm T\},\\
&&\mathcal{U}^\star=\mathcal{U}^\star(V,\langle\cdot,\cdot\rangle,J_\pm):=\{T\in\mathcal{O}:TJ_\pm= J_\pm T
\text{ or }TJ_\pm=-J_\pm T\}\,.
\end{eqnarray*}
There is a natural $\mathbb{Z}_2$ valued character $\chi$ of $\mathcal{U}^\star$ so that
$$TJ_\pm=\chi(T)\cdot J_\pm T\quad\text{for}\quad T\in\mathcal{U}^\star\,.$$
We let $\mathcal{O}_0$ and $\mathcal{U}_0$ denote the connected component of the identity.

These groups act on tensors of all types. Let $\pone$ be the trivial $\mathcal{U}^\star$ module
and, by an abuse of notation, let $\chi$ be the linear representation space corresponding to the character given above. 
We define the following modules:
$$
\begin{array}{ll}
S_{0,\mp}^2=\{\theta\in S^2:J_\pm\theta=\mp\theta\text{ and }\theta\perp\langle\cdot,\cdot\rangle\},&
S_\pm^2=\{\theta\in S^2:J_\pm\theta=\pm\theta\},\\
\Lambda_{0,\mp}^2=\{\theta\in\Lambda^2:J_\pm\theta=\mp\theta\text{ and }\theta\perp\Omega\},&
\Lambda_\pm^2=\{\theta\in \Lambda^2:J_\pm\theta=\pm\theta\}.\gronko
\end{array}$$
The $\mathcal{O}$ module decomposition $V^*\otimes V^*=\Lambda^2\oplus S^2$ is an orthogonal direct sum of the 
alternating and symmetric $2$-tensors. We may further decompose:
\begin{eqnarray*}
&&\Lambda^2=\chi\oplus\Lambda_{0,+}^2\oplus\Lambda_-^2,\ 
S^2=\pone\oplus S^2_{0,+}\oplus S^2_-\quad\text{(complex setting)},\\
&&\Lambda^2=\chi\oplus\Lambda_{0,-}^2\oplus\Lambda_+^2,\ 
S^2=\pone\oplus S^2_{0,-}\oplus S^2_+\quad\text{(para-complex setting)}\,.
\end{eqnarray*}
The decompositions of $\Lambda^2$ and $S^2$ given above are into irreducible and inequivalent
$\mathcal{U}^\star$ modules. In the para-Hermitian setting, the modules are not irreducible 
if we replace $\mathcal{U}^\star$ by $\mathcal{U}$. In the Hermitian setting, these modules 
are still irreducible but not inequivalent if we replace $\mathcal{U}^\star$ by $\mathcal{U}$. 
Thus $\mathcal{U}^\star$ is the appropriate structure group for our purposes . 
The space of {\it algebraic K\"ahler--Weyl curvature tensors} is given by $\mathfrak{K}_{\mathfrak{W}}\subset\otimes^4V^*$ is defined by 
imposing the symmetries of Equation~(\ref{eqn-1.a}) and
Equation~(\ref{eqn-1.d}). There is \cite{B12,GN11c} an orthogonal direct sum decomposition 
of $\mathfrak{K}_{\mathfrak{W}}$ into inequivalent and
irreducible
$\mathcal{U}^\star$-modules:
$$\mathfrak{K}_{\mathfrak{W}}=
\left\{\begin{array}{l}
W_1\oplus W_2\oplus W_3\oplus L_{0,+}^2\oplus L_-^2\text{ (complex setting)}\\
W_1\oplus W_2\oplus W_3\oplus L_{0,-}^2\oplus L_+^2\text{ (para-complex setting)}
\end{array}\right\}\,.$$
We have  $W_3=\ker(\rho)\cap\mathfrak{K}_{\mathfrak{W}}$ and, furthermore, 
that $\rho_s$ and $\rho_a$ define $\mathcal{U}^\star$
module isomorphisms \cite{B12,GN12,GN11c}:
$$\operatorname{Tr}(\rho):W_1\mapright{\approx}\pone,\quad
  \rho_s:W_2\mapright{\approx}S_{0,\mp}^2,\quad
  \rho_a:L_{0,\mp}^2\mapright{\approx}\Lambda_{0,\mp}^2,\quad
  \rho_a:L_\pm^2\mapright{\approx}\Lambda_\pm^2\,.
$$
We note for further reference that in dimension $4$ we have:
$$\begin{array}{lll}
\dim\{W_1\}=1,&
\dim\{W_2\}=3,&
\dim\{W_3\}=5,\\
\dim\{\Lambda_{0,\mp}^2\}=3,&
\dim\{L_\pm^2\}=2\,.
\end{array}$$

\subsection{Lie groups}
Let $G$ be a $4$-dimensional Lie group which is equipped with an integrable left invariant 
complex structure (resp. para-complex structure) and a left invariant
pseudo-Hermitian metric (resp. para-Hermitian metric). Then the associated Lie algebra $\mathfrak{g}$ 
is equipped with an almost complex structure (resp. para-complex structure) with
vanishing Nijenhuis tensor and a pseudo-Hermitian (resp. para-Hermitian) inner product
$\langle\cdot,\cdot\rangle$. Conversely, given
$(\mathfrak{g},\langle\cdot,\cdot\rangle,J_\pm)$ where $\mathfrak{g}=(\mathbb{R}^4,[\cdot,\cdot])$ 
is a 4-dimensional Lie algebra equipped with an integrable pseudo-Hermitian ($-$) or para-Hermitian ($+$)
structure, there is a unique simply connected Lie group $G$ with Lie algebra $\mathfrak{g}$
so $J_\pm$ induces a left invariant integrable complex  (resp. para-complex) structure
on $G$ and $\langle\cdot,\cdot\rangle$
induces a left-invariant pseudo-Hermitian metric (resp. para-Hermitian metric) on $G$. 
Thus we can work in the algebraic context henceforth. 
Fix a pseudo-Hermitian ($-$) or para-Hermitian ($+$) vector space $(V,\langle\cdot,\cdot\rangle,J_\pm)$. 
Given $\Xi\in\Lambda_{0,+}^2\oplus\Lambda_-^2$ or $\Xi\in\Lambda_{0,-}^2\oplus\Lambda_+^2$, we shall
try to define a bracket $[\cdot,\cdot]$ so that $J_\pm$ is integrable and so that if $\nabla$ is the associated
K\"ahler--Weyl connection, then $\rho_a=\Xi$; by Equation~(\ref{eqn-1.b}) and Theorem~\ref{thm-1.2} one has:
$$\rho_a=-2d\phi=\left\{\begin{array}{ll}\phantom{-}dJ_-\delta\Omega\text{ if }J_-\text{ is complex}\\
-dJ_+\delta\Omega\text{ if }J_+\text{ is para-complex}\end{array}\right\}\,.
$$

The following theorem is the fundamental result of this paper 
(we refer to \cite{B12} for a
survey of other results concerning geometric realizability):
\begin{theorem}\label{thm-1.3}
\ \begin{enumerate}
\item Let $(V,\langle\cdot,\cdot\rangle,J_-)$ be a $4$-dimensional Hermitian
vector space of signature $(0,4)$. Then
every element of $\Lambda_{0,+}^2\oplus\Lambda_-^2$ 
is realizable by a $4$-dimensional Hermitian Lie group.
\item Let $(V,\langle\cdot,\cdot\rangle,J_+)$ be a $4$-dimensional para-Hermitian
vector space of signature $(2,2)$. Then
every element of $\Lambda_{0,-}^2\oplus\Lambda_+^2$ 
is realizable by a $4$-dimensional para-Hermitian Lie group.
\end{enumerate}\end{theorem}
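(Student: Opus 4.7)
The plan is to pass to the algebraic side and, for each prescribed $\Xi$, write down a Lie bracket on $V$ whose induced left-invariant K\"ahler--Weyl structure realizes it. The key simplification comes from Theorem~\ref{thm-1.2}: in dimension $4$ the whole alternating Ricci tensor is determined by a single $1$-form, since $\rho_a=\mp d(J_\pm\delta\Omega)$. On a Lie group all the relevant objects ($\Omega$, $\delta\Omega$, $J_\pm\delta\Omega$) are left-invariant, so $d$ acts purely algebraically via $d\alpha(X,Y)=-\alpha([X,Y])$. The problem therefore reduces to choosing structure constants $c_{ij}^k$ for $[\cdot,\cdot]$ that satisfy three conditions: the Jacobi identity, the vanishing of $N_{J_\pm}$, and a prescribed value of the $2$-form $d(J_\pm\delta\Omega)$.

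First I would fix a pseudo-orthonormal basis $\{e_1,e_2,e_3,e_4\}$ adapted to $J_\pm$ — in the Hermitian case with $J_-e_1=e_2$ and $J_-e_3=e_4$; in the para-Hermitian case with $J_+e_1=e_3$ and $J_+e_2=e_4$, chosen so that the $\pm1$-eigenspaces are the standard null planes of signature $(2,2)$. Writing $[e_i,e_j]=\sum_k c_{ij}^k e_k$, the condition $N_{J_\pm}=0$ is linear in the $c_{ij}^k$ and can be solved explicitly. I would then restrict to a solvable ansatz of the form $[e_1,e_j]=A_j^k e_k$ (plus a small number of further nonzero brackets), since for such almost-abelian families the Jacobi identity becomes trivial or near-trivial, while the parameter space still carries at least the $5=\dim(\Lambda_{0,\mp}^2\oplus\Lambda_\pm^2)$ degrees of freedom required.

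With the ansatz in hand, the third step is the explicit computation: assemble $\Omega$ from $\langle\cdot,\cdot\rangle$ and $J_\pm$; compute $\delta\Omega=-\star d\star\Omega$ using $de^k(e_i,e_j)=-c_{ij}^k$; apply $J_\pm$ to the resulting $1$-form; and finally take $d$ once more. This produces an explicit map $\Phi$ from the parameter space into $\Lambda_{0,\mp}^2\oplus\Lambda_\pm^2$. To finish, I would show that $\Phi$ is surjective by exhibiting five independent elements of the target that are each hit by a concrete bracket, or equivalently by verifying that the Jacobian of $\Phi$ at a convenient basepoint has maximal rank $5$. When convenient, I would split the problem and realize arbitrary elements of $\Lambda_{0,\mp}^2$ by one class of brackets and arbitrary elements of $\Lambda_\pm^2$ by another, exploiting the linearity of $\rho_a=-2d\phi$ in $\phi$.

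The main obstacle is keeping enough freedom after imposing integrability and Jacobi: too tight an ansatz will make $\Phi$ miss one of the two summands, while too loose an ansatz buries the computation in a swamp of quadratic Jacobi relations. I therefore expect the real content of the proof to be the careful selection of the family of Lie algebras, together with the explicit verification that the Jacobian of $\Phi$ is nondegenerate — separately for the complex $(0,4)$ and para-complex $(2,2)$ cases, since the reality structure of the $\pm\sqrt{-1}$-eigenspaces enters differently in each setting and requires slightly different ansatz choices.
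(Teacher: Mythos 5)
Your overall skeleton -- pass to the Lie algebra, note that $\rho_a=\mp d(J_\pm\delta\Omega)$ is computed purely algebraically from the structure constants, and pick a solvable ansatz so that Jacobi is manageable -- is exactly what the paper does (Section~\ref{sect-3.2} writes down one explicit two-step solvable family and computes Equation~(\ref{eqn-3.a})). But your plan for the surjectivity step has two genuine problems. First, the proposed shortcut of realizing $\Lambda^2_{0,\mp}$ and $\Lambda^2_\pm$ separately ``exploiting the linearity of $\rho_a=-2d\phi$ in $\phi$'' does not work: on a Lie group both $\phi=\pm\frac12J_\pm\delta\Omega$ \emph{and} the exterior derivative of a left-invariant $1$-form depend on the bracket, so $\rho_a$ is \emph{quadratic} in the structure constants (every coefficient in Equation~(\ref{eqn-3.a}) is a product of two parameters), and realizations cannot be superposed -- quite apart from the fact that a sum of two Lie brackets need not satisfy Jacobi. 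Second, checking that the Jacobian of $\Phi$ has rank $5$ at a basepoint only gives an open image near that point; since $\Phi$ is homogeneous of degree $2$ under rescaling of the bracket, its image is a cone, and ``open cone'' does not imply ``everything.'' Indeed the paper's own family visibly fails to be surjective onto the full $5$-dimensional target: Equation~(\ref{eqn-3.a}) never produces a component along $\theta_1=e^1\wedge e^3-e^2\wedge e^4$.

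The idea you are missing, and which the paper uses to close exactly this gap, is that realizability is invariant under the structure group $\mathcal{U}$, so one only needs one representative per $\mathcal{U}$-orbit. Lemma~\ref{lem-2.1} shows the action of $\mathcal{U}_0$ on $\Lambda^2_{0,\mp}\oplus\Lambda^2_\pm$ surjects onto $\mathcal{O}_0(\Lambda^2_{0,\mp})\oplus\mathcal{O}_0(\Lambda^2_\pm)$. In the definite Hermitian case the orbits are therefore classified by the two norms $(|\Xi_{0,+}|^2,|\Xi_-|^2)$ (Lemma~\ref{lem-2.2}), so a two-parameter family suffices; in the para-Hermitian case every orbit meets the hyperplane $\theta_1^\perp$ (Lemma~\ref{lem-2.3}), so the four-dimensional image of the explicit family suffices. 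Without this reduction you would need to exhibit a strictly larger family of integrable brackets than the one the authors found workable, and you would still need a global (not Jacobian-rank) surjectivity argument for a quadratic map. I would also flag that in the Hermitian $(0,4)$ case the bracket must be built on the complexification with the reality constraint $\overline{[x,y]}=[\bar x,\bar y]$ and integrability reads $[Z_1,Z_2]\in\operatorname{Span}\{Z_1,Z_2\}$; you note the two cases differ but the proposal does not actually set up this reality structure.
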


\begin{remark}
\rm The corresponding geometrical realization question without the 
assumption of homogeneity was established
previously in \cite{GN12,GN11c}; the question at hand of providing homogeneous examples realizing all such tensors
$\Xi$ was posed to us by Prof. Alekseevsky and we are grateful to him for the suggestion. Related questions
have been examined previously. See, for example, the discussion in
\cite{K97} of homogeneous Einstein--Weyl structures on symmetric spaces, the discussion in \cite{ET00} of (complex)
$3$-dimensional homogeneous metrics which admit Einstein--Weyl connections, and the discussion in \cite{ET97} dealing
with (among other matters) the $4$-dimensional Einstein--Weyl equations in the homogeneous setting.\end{remark}

Here is a brief outline of the remainder of this paper. In Section~\ref{sect-2}, 
we recall some of the geometry of complex and para-complex Lie algebras and in Section~\ref{sect-3}, we establish 
Theorem~\ref{thm-1.3}.

\section{A review of complex and para-complex geometry}\label{sect-2}

\subsection{The action of the unitary group on $\Lambda_{0,\mp}^2\oplus\Lambda_\pm^2$}
If $G$ is a Lie group, let $G_0$ be the connected component of the identity. 
The following is a useful fact; we omit the proof as it is an
entirely elementary computation:

\begin{lemma}\label{lem-2.1}
Let
 $(V,\langle\cdot,\cdot\rangle,J_\pm)$ be a pseudo-Hermitian ($-$) or a
para-Hermitian ($+$) vector space. 
The natural action of the unitary group $\mathcal{U}$ on
 $\Lambda_{0,\mp}^2\oplus\Lambda_\pm^2$ defines a surjective group homomorphism $\pi$ from 
 $\mathcal{U}_0$ to $\mathcal{O}_0(\Lambda_{0,\mp}^2)\oplus\mathcal{O}_0(\Lambda_\pm^2)$.
 \end{lemma}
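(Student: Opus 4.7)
The plan is to split the verification into two easy pieces: (a) show $\pi$ is well-defined and lands in the connected component of the identity, and (b) establish surjectivity by a dimension count combined with an injectivity argument for $d\pi|_e$.

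For (a), I would observe that any $T\in\mathcal{U}$ preserves both $\langle\cdot,\cdot\rangle$ and $J_\pm$, hence it preserves the entire $\mathcal{U}^\star$-module decomposition of $\Lambda^2$ as well as the induced (non-degenerate) inner product on each summand. In particular $T$ restricts to an isometry of $\Lambda_{0,\mp}^2$ and of $\Lambda_\pm^2$, giving a Lie group homomorphism $\mathcal{U}\to\mathcal{O}(\Lambda_{0,\mp}^2)\times\mathcal{O}(\Lambda_\pm^2)$; restricting to the connected component of the identity delivers $\pi$.

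For (b), I would first match dimensions. In the Hermitian signature $(0,4)$ case, $\mathcal{U}_0=U(2)$ has dimension $4$, while the target has dimension $\dim\mathcal{O}_0(\Lambda_{0,+}^2)+\dim\mathcal{O}_0(\Lambda_-^2)=3+1=4$; in the para-Hermitian signature $(2,2)$ case, $\mathcal{U}_0\cong GL^+(2,\mathbb{R})$ also has dimension $4$, while the target summands (of dimensions $3$ and $2$) again contribute $3+1=4$. So in both settings source and target have the same dimension, and it suffices to show $d\pi|_e$ is injective. Suppose $T\in\mathfrak{u}$ acts as $0$ on both $\Lambda_{0,\mp}^2$ and $\Lambda_\pm^2$. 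Because $T\in\mathfrak{u}$ commutes with $J_\pm$ and is skew with respect to $\langle\cdot,\cdot\rangle$, an immediate computation gives $T\cdot\Omega=0$, so $T$ also annihilates the remaining trivial summand $\chi=\langle\Omega\rangle$ and hence acts as $0$ on all of $\Lambda^2$. But the action of $T$ on $\Lambda^2$ is a derivation, and its eigenvalues are pairwise sums $\lambda_i+\lambda_j$ of eigenvalues of $T$ acting on $V$; the vanishing of all such sums in dimension $4$ forces every $\lambda_i=0$, and a routine check in a basis upgrades this to $T=0$. Thus $\ker d\pi|_e=0$, and by dimensions $d\pi|_e$ is a linear isomorphism.

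From here the conclusion is soft: $\pi$ is a local diffeomorphism near the identity, so its image contains an open neighborhood of $e$ in the target; the image is therefore an open subgroup of a connected Lie group, hence all of it. The one step that requires genuine care, rather than just bookkeeping, is the injectivity of $d\pi|_e$ — and in particular the observation that elements of $\mathfrak{u}$ automatically kill $\Omega$ — but this is purely algebraic and straightforward in either signature, which is why the authors elide it as an elementary computation.
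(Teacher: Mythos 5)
Your proposal is correct; note that the paper itself gives no argument for Lemma~\ref{lem-2.1} (it is dismissed as ``an entirely elementary computation''), so there is no official proof to compare against --- what you have written is a legitimate way of supplying it. The two halves are both sound: (a) $\mathcal{U}$ preserves each summand together with its (non-degenerate) induced inner product, and the continuous image of the connected group $\mathcal{U}_0$ lands in the identity component; (b) the dimension counts check out in both signatures ($\dim U(2)=4=\dim SO(3)+\dim SO(2)$ and $\dim GL^+(2,\mathbb{R})=4=\dim SO_0(2,1)+\dim SO_0(1,1)$), the observation that $T\in\mathfrak{u}$ kills $\Omega$ is a one-line computation from skew-adjointness plus $TJ_\pm=J_\pm T$, and the faithfulness of the $\mathfrak{gl}(V)$-action on $\Lambda^2V$ for $\dim V\ge 3$ (your ``routine check in a basis'', which is really the step that matters --- the eigenvalue remark alone only gives nilpotency) then forces $\ker d\pi|_e=0$. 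The open-subgroup-of-a-connected-group argument finishes it. An alternative, more hands-on route would be to exhibit explicit one-parameter subgroups of $\mathcal{U}_0$ generating each target factor (e.g.\ the determinant circle acting on $\Lambda_-^2$ and the adjoint action of $SU(2)$ on $\Lambda_{0,+}^2$ in the definite case), which is presumably the computation the authors had in mind; your Lie-algebra argument buys uniformity across the two signatures at the cost of invoking the standard local-diffeomorphism criterion.
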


\subsection{Hermitian signature $(0,4)$}\label{sect-2.2}
Let $(V,\langle\cdot,\cdot\rangle,J_-)$ be a 4-dimensional Hermitian vector space of signature $(0,4)$. 
Let $\{e_1,e_2,e_3,e_4\}$ be an orthonormal basis for $V$ and let $\{e^1,e^2,e^3,e^4\}$ be
the associated dual basis for $V^*$. We normalize the choice
so that the
complex structure $J_-$ and diagonal inner products are given by:
\begin{eqnarray*}
&&J_-e_1=e_2,\quad J_-e_2=-e_1,\quad J_-e_3=e_4,\quad J_-e_4=-e_3,\\
&&\langle e_1,e_1\rangle=\langle e_2,e_2\rangle=\langle e_3,e_3\rangle=\langle e_4,e_4\rangle=2,\\
&&\langle e^1,e^1\rangle=\langle e^2,e^2\rangle=\langle e^3,e^3\rangle=\langle e^4,e^4\rangle=\textstyle\frac12.
\end{eqnarray*}
We define  a complex basis $\{Z_1,Z_2,\bar Z_1,\bar Z_2\}$ for $V_{\mathbb{C}}:=V\otimes_{\mathbb{R}}\mathbb{C}$
and the corresponding complex dual basis $\{Z^1,Z^2,\bar Z^1,\bar Z^2\}$ for the complex dual space $V_{\mathbb{C}}^*$ by setting:
$$\begin{array}{ll}
Z_1:=\frac12(e_1-\sqrt{-1}e_2),&Z_2:=\frac12(e_3-\sqrt{-1}e_4),\\
\bar Z_1:=\frac12(e_1+\sqrt{-1}e_2),&\bar Z_2:=\frac12(e_3+\sqrt{-1}e_4),\vphantom{\vrule height 11pt}\\
Z^1:=\phantom{\frac12}(e^1+\sqrt{-1}e^2),&Z^2:=\phantom{\frac12}(e^3+\sqrt{-1}e^4),\vphantom{\vrule height 11pt}\\
\bar Z^1:=\phantom{\frac12}(e^1-\sqrt{-1}e^2),&\bar Z^2:=\phantom{\frac12}(e^3-\sqrt{-1}e^4).\vphantom{\vrule height 11pt}
\end{array}$$
Then we have 
\medbreak\qquad$\langle Z_1,\bar Z_1\rangle=1$, $\langle Z^1,\bar Z^1\rangle=1$,
$\langle Z_2,\bar Z_2\rangle=1$, $\langle Z^2,\bar Z^2\rangle=1$,
\smallbreak\qquad
$J_-Z_1=\sqrt{-1}Z_1,\qquad\qquad\quad J_-\bar Z_1=-\sqrt{-1}\bar Z_1,$
\smallbreak\qquad
$J_-Z_2=\sqrt{-1}Z_2,\qquad\qquad\quad J_-\bar Z_2=-\sqrt{-1}\bar Z_2,$
\smallbreak\qquad
$J_-Z^1=\sqrt{-1}Z^1,\qquad\qquad\quad  J_-\bar Z^1=-\sqrt{-1}\bar Z^1,$
\smallbreak\qquad
$J_-Z^2=\sqrt{-1}Z^2,\qquad\qquad\quad  J_-\bar Z^2=-\sqrt{-1}\bar Z^2$.
\medbreak\noindent
Because $\Omega(Z_j,\bar Z_j)=\langle Z_j,J_-\bar Z_j\rangle=-\sqrt{-1}$, the K\"ahler form is given by:
$$
\Omega=-\sqrt{-1}\{Z^1\wedge\bar Z^1+ Z^2\wedge\bar Z^2\}\,.
$$
Let $[\cdot,\cdot]$ be a Lie bracket on $V_{\mathbb{C}}$. Then
 $[\cdot,\cdot]$ is integrable if and only if
$$
[Z_1,Z_2]\in\operatorname{Span}\{Z_1,Z_2\}
$$
and $[\cdot,\cdot]$ arises from an underlying real bracket on $V$
if and only if 
$$
\overline{[x,y]}=[\bar x,\bar y]\text{ for all }x,y\in V_{\mathbb{C}}\,.
$$
We define a basis
$\{\theta_1,\theta_2,\theta_3\}$ for $\Lambda_{0,+}^2$ and a basis $\{\theta_4,\theta_5\}$ for $\Lambda_-^2$ by setting:
$$\begin{array}{ll}
\theta_1:=\sqrt{-1}(Z^1\wedge \bar Z^1
      - Z^2\wedge\bar Z^2),&
\theta_2:=Z^1\wedge \bar Z^2+\bar Z^1\wedge Z^2,\\
\theta_3:=\sqrt{-1}(Z^1\wedge \bar Z^2-\bar Z^1\wedge Z^2),\\
\theta_4:=Z^1\wedge Z^2+\bar Z^1\wedge \bar Z^2,&
\theta_5:=\sqrt{-1}(Z^1\wedge Z^2-\bar Z^1\wedge \bar Z^2)\,.
\end{array}$$
The collection $\{\theta_1,...,\theta_5\}$ is an orthogonal set with the diagonal inner products given by:
$$\begin{array}{lllll}
\langle\theta_1,\theta_1\rangle=2,&
\langle\theta_2,\theta_2\rangle=2,&
\langle\theta_3,\theta_3\rangle=2,&
\langle\theta_4,\theta_4\rangle=2,&
\langle\theta_5,\theta_5\rangle=2\,.
\end{array}$$
Consequently, $\Lambda_{0,+}^2$ has signature $(0,3)$ and
$\Lambda_-^2$ has signature $(0,2)$. Let
$$\Theta^{x,y}:=\{\Xi\in\Lambda_{0,+}^2\oplus\Lambda_-^2:|\Xi_{0,+}|^2=x\text{ and }|\Xi_-|^2=y\}\,.$$

\begin{lemma}\label{lem-2.2}
If $(V,\langle\cdot,\cdot\rangle,J_-)$ is a Hermitian inner product space of signature $(0,4)$, then $\{\Theta^{x,y}\}_{x\ge0,y\ge0}$ 
are the orbits of $\mathcal{U}$ acting on $\Lambda_{0,+}^2\oplus\Lambda_-^2$.\end{lemma}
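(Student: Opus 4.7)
The plan is to show both containments: that each $\Theta^{x,y}$ is contained in a single $\mathcal{U}$-orbit, and that distinct $\Theta^{x,y}$ lie in distinct orbits.

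For the easy direction, $\mathcal{U}$ acts by isometries of $\langle\cdot,\cdot\rangle$, and from the module decomposition recalled in the introduction each $\mathcal{U}^\star$-irreducible summand $\Lambda_{0,+}^2$ and $\Lambda_-^2$ is $\mathcal{U}$-invariant. Hence the two quantities $|\Xi_{0,+}|^2$ and $|\Xi_-|^2$ are $\mathcal{U}$-invariants of $\Xi\in\Lambda_{0,+}^2\oplus\Lambda_-^2$, so each $\Theta^{x,y}$ is a union of $\mathcal{U}$-orbits and orbits with different invariants are disjoint.

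For transitivity on each $\Theta^{x,y}$, I would invoke Lemma~\ref{lem-2.1}. The explicit orthogonal bases $\{\theta_1,\theta_2,\theta_3\}$ and $\{\theta_4,\theta_5\}$ have all diagonal inner products equal to $2$, so the induced inner products on $\Lambda_{0,+}^2$ and $\Lambda_-^2$ are (up to an overall sign dictated by the convention of the paper) definite of ranks $3$ and $2$. Thus $\mathcal{O}_0(\Lambda_{0,+}^2)\cong SO(3)$ and $\mathcal{O}_0(\Lambda_-^2)\cong SO(2)$, and each factor acts transitively on pseudo-spheres of a fixed positive radius in its respective summand (a standard fact, valid because the rank is $\ge 2$ in each case). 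Consequently $\mathcal{O}_0(\Lambda_{0,+}^2)\oplus\mathcal{O}_0(\Lambda_-^2)$ acts transitively on $\Theta^{x,y}$ for every $(x,y)$ with $x,y\ge 0$. By the surjectivity of $\pi$ provided by Lemma~\ref{lem-2.1}, the same is true for $\mathcal{U}_0$, hence a fortiori for $\mathcal{U}$.

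Combining both parts identifies the orbits of $\mathcal{U}$ on $\Lambda_{0,+}^2\oplus\Lambda_-^2$ with the family $\{\Theta^{x,y}\}_{x\ge 0,\,y\ge 0}$. The only substantive step is Lemma~\ref{lem-2.1}; once that surjectivity onto $\mathcal{O}_0(\Lambda_{0,+}^2)\oplus\mathcal{O}_0(\Lambda_-^2)$ is granted, the remainder is a transitivity statement for the standard action of a compact rotation group on Euclidean spheres, and requires no further computation. There are no genuine obstacles beyond verifying the signature claim via the normalized bases $\{\theta_i\}$ already exhibited.
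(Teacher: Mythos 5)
Your proposal is correct and follows essentially the same route as the paper: both directions rest on $\mathcal{U}$ acting orthogonally and preserving the two summands (so the norms are invariants), together with Lemma~\ref{lem-2.1} to transfer the transitive action of $\mathcal{O}_0(\Lambda_{0,+}^2)\oplus\mathcal{O}_0(\Lambda_-^2)$ on products of spheres back to $\mathcal{U}$. The only difference is that you spell out the definiteness of the induced inner products and the identification with $SO(3)\times SO(2)$, which the paper leaves implicit.
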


\begin{proof} The sets $\Theta^{x,y}$ are the product of a sphere of radius $x$ in $\Lambda_{0,+}^2$ and a
sphere of radius $y$ in $\Lambda_-^2$ and thus represent the orbits of 
$\mathcal{O}_0(\Lambda_{0,+}^2)\oplus\mathcal{O}_0(\Lambda_-^2)$ acting
on $\Lambda_{0,+}^2\oplus\Lambda_-^2$. Consequently, by Lemma~\ref{lem-2.1}, $\mathcal{U}$ acts transitively on these sets.
On the other hand, since $\mathcal{U}$ acts orthogonally, $\mathcal{U}$ preserves these sets. \end{proof}

\subsection{Para-Hermitian signature $(2,2)$}\label{sect-2.3}
Let $(V,\langle\cdot,\cdot\rangle,J_+)$ be a para-Hermitian vector space of signature $(2,2)$. 
Choose a basis
$\{e_1,e_2,e_3,e_4\}$ for $V$ so that the basis is hyperbolic and so that $J_+$ is diagonalized:
\begin{eqnarray*}
&&\langle e_1,e_3\rangle=\langle e_2,e_4\rangle=1,\\
&&J_+e_1=e_1,\quad J_+e_2=e_2,\quad J_+e_3=-e_3,\quad J_+e_4=-e_4\,.
\end{eqnarray*}
We then have that
$$\Omega=-e^1\wedge e^3-e^2\wedge e^4\,.$$
A Lie bracket on $V$ is integrable
if and only if 
$$
[e_1,e_2]\in\operatorname{Span}\{e_1,e_2\}\quad\text{and}\quad[e_3,e_4]\in\operatorname{Span}\{e_3,e_4\}\,.
$$ 
We define an orthogonal basis $\{\theta_1,\theta_2,\theta_3\}$ for $\Lambda_{0,-}^2$ and 
an orthogonal basis $\{\theta_4,\theta_5\}$ for $\Lambda_+^2$ by setting:
$$\begin{array}{lll}
\theta_1:=e^1\wedge e^3-e^2\wedge e^4,&\theta_2:=e^1\wedge e^4+e^2\wedge e^3,&
\theta_3:=e^1\wedge e^4-e^2\wedge e^3,\\
\theta_4:=e^1\wedge e^2+e^3\wedge e^4,&\theta_5:=e^1\wedge e^2-e^3\wedge e^4.
\end{array}$$
The diagonal inner products are given by:
$$\begin{array}{lllll}
\langle\theta_1,\theta_1\rangle=-2,&
\langle\theta_2,\theta_2\rangle=-2,&
\langle\theta_3,\theta_3\rangle=2,\\
\langle\theta_4,\theta_4\rangle=\phantom{-}2,&
\langle\theta_5,\theta_5\rangle=-2.
\end{array}$$
Thus $\Lambda_{0,-}^2$ has signature $(2,1)$ and $\Lambda_+^2$ has signature $(1,1)$.
\begin{lemma}\label{lem-2.3}
Every orbit of the action of $\mathcal{U}$ on 
$\Lambda_{0,-}^2\oplus\Lambda_+^2$ contains a representative perpendicular to $\theta_1$.
\end{lemma}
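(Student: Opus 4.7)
The plan is to reduce the problem via Lemma~\ref{lem-2.1} to an orbit statement for $\mathcal{O}_0(\Lambda_{0,-}^2)$ acting on $\Lambda_{0,-}^2$. Since the decomposition $\Lambda_{0,-}^2\oplus\Lambda_+^2$ is $\mathcal{U}^\star$-invariant and $\theta_1\in\Lambda_{0,-}^2$, any element $U\in\mathcal{U}$ preserves both summands and therefore $\langle U\Xi,\theta_1\rangle = \langle U\Xi_{0,-},\theta_1\rangle$, where $\Xi=\Xi_{0,-}+\Xi_+$ is the canonical splitting. Thus it suffices to produce $T\in\mathcal{O}_0(\Lambda_{0,-}^2)$ with $T\Xi_{0,-}\perp\theta_1$; by Lemma~\ref{lem-2.1}, pairing $T$ with the identity on $\Lambda_+^2$ and pulling back through the surjection $\pi\colon\mathcal{U}_0\to\mathcal{O}_0(\Lambda_{0,-}^2)\oplus\mathcal{O}_0(\Lambda_+^2)$ yields the required element of $\mathcal{U}$.

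Next I would exploit the explicit signature computation from Section~\ref{sect-2.3}. Writing $\Xi_{0,-}=a\theta_1+b\theta_2+c\theta_3$, the signs $\langle\theta_1,\theta_1\rangle=\langle\theta_2,\theta_2\rangle=-2$ and $\langle\theta_3,\theta_3\rangle=2$ show that $\operatorname{Span}\{\theta_1,\theta_2\}$ is a negative-definite $2$-plane in the signature $(2,1)$ space $\Lambda_{0,-}^2$, orthogonal to $\theta_3$. On this $2$-plane, ordinary Euclidean rotations
$$T_t\theta_1:=\cos t\,\theta_1+\sin t\,\theta_2,\qquad T_t\theta_2:=-\sin t\,\theta_1+\cos t\,\theta_2,\qquad T_t\theta_3:=\theta_3$$
define a one-parameter subgroup of $\mathcal{O}_0(\Lambda_{0,-}^2)=SO_0(2,1)$, because the $\{\theta_1,\theta_2\}$-block is an honest $SO(2)$ and the plane is definite.

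Applied to $\Xi_{0,-}$, this gives
$$T_t\Xi_{0,-}=(a\cos t-b\sin t)\theta_1+(a\sin t+b\cos t)\theta_2+c\theta_3,$$
so the $\theta_1$-coefficient vanishes by choosing $t$ with $\tan t=a/b$ when $b\neq 0$, and $t=\pi/2$ when $b=0$. Lifting $T_t\oplus\Id_{\Lambda_+^2}$ through $\pi$ to some $U\in\mathcal{U}_0$ produces an orbit representative $U\Xi$ perpendicular to $\theta_1$, which completes the argument.

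The only point that needs care is the connectedness claim: one must verify that the rotation $T_t$ lies in $\mathcal{O}_0(\Lambda_{0,-}^2)$ rather than some other component of $O(2,1)$. This is the mild obstacle, but it is immediate because $T_t$ is a continuous path from $T_0=\Id$ to $T_t$. Everything else is either explicit linear algebra or a direct invocation of Lemma~\ref{lem-2.1}.
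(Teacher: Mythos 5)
Your proof is correct and follows essentially the same route as the paper's: both reduce via Lemma~\ref{lem-2.1} to the action of $\mathcal{O}_0(\Lambda_{0,-}^2)$ on the signature $(2,1)$ space and exploit that $\{\theta_1,\theta_2\}$ span a definite plane. You simply make explicit the rotation in $\operatorname{Span}\{\theta_1,\theta_2\}$ that the paper summarizes as ``the corresponding assertion holds for this group.''
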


\begin{proof}
$\{\theta_1,\theta_2,\theta_3\}$
is an orthogonal basis for $\Lambda_{0,-}^2$ where $\{\theta_1,\theta_2\}$ are timelike and $\theta_3$ is spacelike. Lemma~\ref{lem-2.3} follows
from Lemma~\ref{lem-2.1} since $\pi(\mathcal{U}_0)$ contains $\mathcal{O}_0(\Lambda_{0,-}^2)$ and the corresponding
assertion holds for this group.
\end{proof}

\section{The proof of Theorem~\ref{thm-1.3}}\label{sect-3}

In Section~\ref{sect-3.1}, we discuss the Hodge operator. In Section~\ref{sect-3.2}, we discuss a specific Lie algebra
which will be used in Section~\ref{sect-3.3} to prove Theorem~\ref{thm-1.3}~(2) and which
will be used  in Section~\ref{sect-3.4} to prove Theorem~\ref{thm-1.3}~(1).

\subsection{The Hodge $\star$ operator}\label{sect-3.1}
Let $\{\Psi_1,\Psi_2,\Psi_3,\Psi_4\}$ be a basis for $\mathbb{C}^4$ and let $\{\Psi^1,\Psi^2,\Psi^3,\Psi^4\}$ be 
the corresponding dual basis for the dual vector space. Take a hyperbolic metric whose non-zero components
are defined by:
$$\langle\Psi_1,\Psi_3\rangle=\langle\Psi_2,\Psi_4\rangle=\langle \Psi^1,\Psi^3\rangle=\langle \Psi^2,\Psi^4\rangle=1\,.$$
This is a convenient notation as it is consistent with previous sections:
\begin{enumerate} 
\item For Section~\ref{sect-2.2}, set $\Psi_1=Z_1$, $\Psi_2=Z_2$, $\Psi_3=\bar Z_1$, and
$\Psi_4=\bar Z_2$. 
\item For Section~\ref{sect-2.3}, set $\Psi_1=e_1$, $\Psi_2=e_2$, $\Psi_3=e_3$, and $\Psi_4=e_4$.
\end{enumerate}

Let $\star$ be the {\it Hodge operator}, let
$\dvol =\Psi^1\wedge \Psi^3\wedge \Psi^2\wedge \Psi^4$
be the volume form, and let $\delta$ be the co-derivative. We use the identity
$$\omega_1\wedge\star\omega_2=g(\omega_1,\omega_2)\dvol$$ to compute:
$$
\begin{array}{llll}
\star \Psi^1=-\Psi^1\wedge \Psi^2\wedge \Psi^4,&
\star \Psi^2=\phantom{-}\Psi^1\wedge \Psi^2\wedge \Psi^3,&
\star \Psi^3=-\Psi^2\wedge \Psi^3\wedge \Psi^4,\\
\star \Psi^4=\phantom{-}\Psi^1\wedge \Psi^3\wedge \Psi^4,&
\star \Psi^1\wedge \Psi^2=-\Psi^1\wedge \Psi^2,&
\star \Psi^1\wedge \Psi^3=-\Psi^2\wedge \Psi^4,\\
\star \Psi^1\wedge \Psi^4=\phantom{-}\Psi^1\wedge \Psi^4,&
\star \Psi^2\wedge \Psi^3=\phantom{-}\Psi^2\wedge \Psi^3,&
\star \Psi^2\wedge \Psi^4=-\Psi^1\wedge \Psi^3,\\
\star \Psi^3\wedge \Psi^4=-\Psi^3\wedge \Psi^4,&
\star \Psi^1\wedge \Psi^2\wedge \Psi^3=-\Psi^2,&
\star \Psi^1\wedge \Psi^2\wedge \Psi^4=\phantom{-}\Psi^1,\\
\star \Psi^1\wedge \Psi^3\wedge \Psi^4=-\Psi^4,&
\star \Psi^2\wedge \Psi^3\wedge \Psi^4=\phantom{-}\Psi^3.
\end{array}$$

\subsection{An example:}\label{sect-3.2}
 We define a complex Lie algebra by setting:$$\begin{array}{lll}
\ [\Psi_1,\Psi_2]=\varepsilon_1\Psi_1,&\ [\Psi_1,\Psi_4]=\alpha_3\Psi_1,&\ [\Psi_2,\Psi_3]=-\tilde\alpha_3\Psi_3,\\
\ [\Psi_2,\Psi_4]=\alpha_2\Psi_1-\tilde\alpha_2\Psi_3,&\ [\Psi_3,\Psi_4]=\tilde\varepsilon_1\Psi_3.
\end{array}$$
We verify that the Jacobi identity is satisfied:
\begin{eqnarray*}
&&[[\Psi_1,\Psi_2],\Psi_3]+[[\Psi_2,\Psi_3],\Psi_1]+[[\Psi_3,\Psi_1],\Psi_2]\\&&\quad
=\varepsilon_1[\Psi_1,\Psi_3]-\tilde\alpha_3[\Psi_3,\Psi_1]+0=0+0+0=0,\\
&&[[\Psi_1,\Psi_2],\Psi_4]+[[\Psi_2,\Psi_4],\Psi_1]+[[\Psi_4,\Psi_1],\Psi_2]\\&&\quad
=\varepsilon_1[\Psi_1,\Psi_4]+[\alpha_2\Psi_1-\tilde\alpha_2\Psi_3,\Psi_1]-\alpha_3[\Psi_1,\Psi_2]\\&&\quad
=\varepsilon_1\alpha_3\Psi_1+0-\alpha_3\varepsilon_1\Psi_1=0,\\
&&[[\Psi_1,\Psi_3],\Psi_4]+[[\Psi_3,\Psi_4],\Psi_1]+[[\Psi_4,\Psi_1],\Psi_3]\\&&\quad
=0+\tilde\varepsilon_1[\Psi_3,\Psi_1]-\alpha_3[\Psi_1,\Psi_3]=0+0+0=0,\\
&&[[\Psi_2,\Psi_3],\Psi_4]+[[\Psi_3,\Psi_4],\Psi_2]+[[\Psi_4,\Psi_2],\Psi_3]\\&&\quad
=-\tilde\alpha_3[\Psi_3,\Psi_4]+\tilde\varepsilon_1[\Psi_3,\Psi_2]-[\alpha_2\Psi_1-\tilde\alpha_2\Psi_3,\Psi_3]\\&&\quad
=-\tilde\alpha_3\tilde\varepsilon_1\Psi_3+\tilde\varepsilon_1\tilde\alpha_3\Psi_3+0=0\,.
\end{eqnarray*}
We define a para-complex structure $J_+$ setting:
$$\begin{array}{llll}
J_+\Psi^1=\Psi^1,& J_+\Psi^2=\Psi^2,&J_+\Psi^3=-\Psi^3,&J_+\Psi^4=-\Psi^4.
\end{array}$$
 We then have
 $$
 \Omega_+:=-(\Psi^1\wedge \Psi^3+\Psi^2\wedge \Psi^4).
$$
We use the formula $d\Psi^i(\Psi_j,\Psi_k)=-\Psi^i([\Psi_j,\Psi_k])$ to compute:
$$\begin{array}{ll}
d\Psi^1=-\epsilon_1\Psi^1\wedge\Psi^2-\alpha_3\Psi^1\wedge\Psi^4-\alpha_2\Psi^2\wedge\Psi^4,&d\Psi^2=0,\\
d\Psi^3=\phantom{-}\tilde\alpha_3\Psi^2\wedge\Psi^3+\tilde\alpha_2\Psi^2\wedge\Psi^4-\tilde\varepsilon_1\Psi^3\wedge\Psi^4,&
d\Psi^4=0.
\end{array}$$
Since $\delta=-\star d\star$ and $\star\Omega=-\Omega$, we have:
\medbreak\quad
$\delta\Omega_+=-\star d\star\Omega_+=-\star d(\Psi^1\wedge \Psi^3+\Psi^2\wedge \Psi^4)$
\smallbreak\qquad\quad
$=\star\{(\varepsilon_1\Psi^1\wedge \Psi^2+\alpha_3\Psi^1\wedge \Psi^4+\alpha_2\Psi^2\wedge \Psi^4)\wedge \Psi^3\}$
\smallbreak\qquad\quad
$+\star\{\Psi^1\wedge(\tilde\alpha_3\Psi^2\wedge \Psi^3+\tilde\alpha_2\Psi^2\wedge \Psi^4
-\tilde\varepsilon_1\Psi^3\wedge \Psi^4)\}$
\smallbreak\qquad\quad
$=\tilde\alpha_2\Psi^1-(\varepsilon_1+\tilde\alpha_3)\Psi^2 -\alpha_2\Psi^3+(\tilde\varepsilon_1+\alpha_3)\Psi^4$,
\medbreak\quad
$dJ_+\delta\Omega_+=d\{\tilde\alpha_2\Psi^1-(\varepsilon_1+\tilde\alpha_3)\Psi^2
+\alpha_2\Psi^3-(\tilde\varepsilon_1+\alpha_3)\Psi^4\}$
\smallbreak\qquad\quad
$=-\tilde\alpha_2\varepsilon_1\Psi^1\wedge \Psi^2-\tilde\alpha_2\alpha_3\Psi^1\wedge \Psi^4
-\tilde\alpha_2\alpha_2\Psi^2\wedge \Psi^4$
\smallbreak\qquad\qquad
$+\alpha_2\tilde\alpha_3\Psi^2\wedge \Psi^3+\alpha_2\tilde\alpha_2\Psi^2\wedge \Psi^4
-\alpha_2\tilde\varepsilon_1\Psi^3\wedge \Psi^4$
\medbreak\noindent which by Theorem~\ref{thm-1.2} yields
\begin{equation}\label{eqn-3.a}
\rho_a=\tilde\alpha_2\varepsilon_1\Psi^1\wedge \Psi^2+\tilde\alpha_2\alpha_3\Psi^1\wedge\Psi^4
-\alpha_2\tilde\alpha_3\Psi^2\wedge\Psi^3
+\alpha_2\tilde\varepsilon_1\Psi^3\wedge\Psi^4.
\end{equation}

\subsection{The proof of Theorem~\ref{thm-1.3}~(2)}\label{sect-3.3}

We now deal with the para-Hermitian setting. Let $\Xi\in\Lambda_{0,-}^2\oplus\Lambda_+^2$. By Lemma~\ref{lem-2.3}, 
we may assume that the coefficient of
$\theta_1=e^1\wedge e^3-e^2\wedge e^4$ in $\Xi$ vanishes, i.e. 
$$
\Xi=\mu_{12}e^1\wedge e^2+\mu_{14}e^1\wedge e^4+\mu_{23}e^2\wedge e^3+\mu_{34}e^3\wedge e^4\,.
$$
We must show that $\Xi$ is geometrically realizable by a $4$-dimensional para-Hermitian Lie group.
We consider the Lie algebra of Section~\ref{sect-3.2} where the parameters 
$\{\varepsilon_1,\tilde\varepsilon_1,\alpha_2,\tilde\alpha_2,\alpha_3,\tilde\alpha_3\}$ are real
and where we set $\Psi_i=e_i$; this Lie algebra is modeled on $A_{2,2}\oplus A_{2,2}$ in 
the classification of \cite{P76} for generic values of the parameters. We apply Equation~(\ref{eqn-3.a}).
We set $\alpha_2=\tilde\alpha_2=1$. The remaining parameters are then determined; we complete the proof by taking:
\medbreak\qquad\qquad
$\varepsilon_1=\mu_{12}$, $\alpha_3=\mu_{14}$, 
$\tilde\alpha_3=-\mu_{23}$, $\tilde\varepsilon_1=\mu_{34}$.\hfill\qed

\subsection{The proof of Theorem~\ref{thm-1.3}~(1)}\label{sect-3.4}
The question of realizability is invariant under the action of the structure
group $\mathcal{U}$. Thus by Lemma~\ref{lem-2.2}, only the norms of $|\Xi_{0,+}|$ and $|\Xi_-|$ are relevant
in establishing Theorem~\ref{thm-1.3}~(1). Again, we use the Lie algebra of Section~\ref{sect-3.2}.
We set $\Psi_1=Z_1$, $\Psi_2=Z_2$, $\Psi_3=\bar Z_1$, $\Psi_4=\bar Z_2$, and take $\tilde\varepsilon_1=\bar\varepsilon_1$,
$\tilde\alpha_2=\bar\alpha_2$, and $\tilde\alpha_3=\bar\alpha_3$ to define an underlying real Algebra
which is modeled on $A_{4,12}$ in the classification of 
\cite{P76} for generic values of the parameters; see also related work in \cite{O03,O04,RS10,S09}. We set
$J_-:=\sqrt{-1}J_+$. We then have $\Omega_-=\sqrt{-1}\Omega_+$ so
\begin{eqnarray*}
&&\phi_-=-\textstyle\frac12J_-\delta\star\Omega_-=\textstyle\frac12J_+\delta\star\Omega_+=\phi_+,\\
&&\rho_a=\bar\alpha_2\varepsilon_1Z^1\wedge Z^2+\bar\alpha_2\alpha_3Z^1\wedge \bar Z^2
-\alpha_2\bar\alpha_3Z^2\wedge \bar Z^1
+\alpha_2\bar\varepsilon_1\bar Z^1\wedge \bar Z^2,\\
&&|\Xi_{0,+}|^2=2|\alpha_2|^2|\alpha_3|^2,\qquad
    |\Xi_-|^2=2|\alpha_2|^2|\varepsilon_1|^2\,.
\end{eqnarray*}
If we set $\alpha_2=1$, \quad $\alpha_3=\sqrt{x/2}$, and $\varepsilon_1=\sqrt{y/2}$, then
$\rho_a\in\Theta^{x,y}$ as desired.\hfill\qed

\section*{Acknowledgments}
Research of the authors partially supported by project MTM2009-07756 (Spain) and by INCITE09 207 151 PR (Spain).

\end{document}